\DeclarePairedDelimiter\abs{\lvert}{\rvert}%
\DeclarePairedDelimiter\norm{\lVert}{\rVert}%
\let\oldabs\abs
\def\abs{\@ifstar{\oldabs}{\oldabs*}}
\let\oldnorm\norm
\def\norm{\@ifstar{\oldnorm}{\oldnorm*}}
\g@addto@macro\bfseries{\boldmath}
\newcommand{\M}{\mathcal{M}}
\newcommand{\T}{\mathbb{T}}
\newcommand{\conj}[1]{\overline{#1}}
\newcommand{\D}{\mathbb{D}}
\newcommand{\cD}{\conj{\mathbb{D}}}
\newcommand{\m}{\textit{m}}
\newcommand{\hd}{Hol(\D)}
\newcommand{\hb}{\mathcal{H}(b)}
\newtheorem{thm}{Theorem}[section]
\newtheorem{lemma}[thm]{Lemma}
\newtheorem{cor}[thm]{Corollary}
\newtheorem{prop}[thm]{Proposition}
\theoremstyle{definition}
\theoremstyle{definition}
\newcommand{\Addresses}{{% additional braces for segregating \footnotesize
		\bigskip
		\footnotesize
		
% 		Adem Limani, \\ \textsc{Centre for Mathematical Sciences, Lund University, \\
% 			Lund, Sweden}\\
% 		\texttt{adem.limani@math.lu.se}
		
% 		\medskip
		
		Bartosz Malman, \\ \textsc{KTH Royal Institute of Technology, \\
			Stockholm, Sweden}\\
			\texttt{malman@kth.se}		
	}}
\begin{document}
\title{\textbf{Cyclic inner functions in growth classes and applications to approximation problems}}
\author{Bartosz Malman}
\date{ }
\maketitle

\begin{abstract}
It is well-known that for any inner function $\theta$ defined in the unit disk $\D$ the following two conditons: $(i)$ there exists a sequence of polynomials $\{p_n\}_n$ such that $\lim_{n \to \infty} \theta(z) p_n(z) = 1$ for all $z \in \D$, and $(ii)$ $\sup_n \| \theta p_n \|_\infty < \infty$, are incompatible, i.e., cannot be satisfied simultaneously. In this note we discuss and apply a consequence of a result by Thomas Ransford from \cite{ransford2021decay}, which shows that if we relax the second condition to allow for arbitrarily slow growth of the sequence $\{ \theta(z) p_n(z)\}_n$ as $|z| \to 1$, then condition $(i)$ can be met. In other words, every growth class of analytic functions contains cyclic singular inner functions. We apply this observation to properties of decay of Taylor coefficients and moduli of continuity of functions in model spaces $K_\theta$. In particular, we establish a variant of a result of Khavinson and Dyakonov on non-existence of functions with certain smoothness properties in $K_\theta$, and we show that the classical Aleksandrov theorem on density of continuous functions in $K_\theta$, and its generalization to de Branges-Rovnyak spaces $\hb$, is essentially sharp. \end{abstract}

\section{Background and results}

Let $X$ be a topological space consisting of functions which are analytic in the unit disk $\D = \{ z \in \mathbb{C} : |z| < 1\}$ and which satisfies some customary desirable properties, such as that the evaluation $f \mapsto f(\lambda)$ is a continuous functional on $X$ for each $\lambda \in \D$, and that the function $z \mapsto zf(z)$ is contained in the space $X$ whenever $f \in X$. A function $g \in X$ is said to be \textit{cyclic} if there exists a sequence of analytic polynomials $\{p_n\}_n$ for which the polynomial multiples $\{gp_n\}_n$ converge to the constant function $1$ in the topology of the space. 

The well-known Hardy classes $H^p$ are among the very few examples of analytic function spaces in which the cyclicity phenomenon is completely understood. The cyclic functions $g$ are of the form \begin{equation} \label{outerdef}
    g(z) = \exp\Big( \int_\T \frac{\zeta + z}{\zeta - z} \log(|g(\zeta)|) \, d\m(\zeta)\Big), \quad z \in \D,
\end{equation} where $d\m$ is the (normalized) Lebesgue measure of the unit circle $\T = \{ z\in \mathbb{C} : |z| = 1\}$. Functions as in \eqref{outerdef} are called \textit{outer functions}. The \textit{inner functions} are of the form \begin{equation} \label{innerdef}
\theta(z) = \prod_{n} \frac{\conj{\alpha_n}}{|\alpha_n|}\frac{z - \alpha_n}{1-\conj{\alpha_n}z} \cdot \exp\Big( -\int_\T \frac{\zeta + z}{\zeta - z} d\nu(\zeta)\Big), \quad z \in \D,
\end{equation} where $\nu$ is a positive finite Borel measure on $\T$ and $\{\alpha_n\}_n$ is a Blaschke sequence. It is clear that if the Blaschke product on the left is non-trivial, then $\theta$ vanishes at points in $\D$ and therefore cannot be cyclic in any reasonable space of analytic functions $X$. The right factor is a \textit{singular inner function}, and it is well-known that if a function $g \in H^p$ has a singular inner function as a factor, then $g$ is not cyclic in $H^p$. As a consequence, if $\{p_n\}_n$ is a sequence of polynomials for which we have $$\lim_{n \to \infty} \theta(z)p_n(z) = 1, \quad z \in \D,$$ then necessarily the Hardy class norms of the sequence must explode: $$\lim_{n \to \infty} \, \|\theta p_n\|^p_{H^p} := \lim_{n \to \infty} \int_\T |\theta p_n|^p d\m = \infty$$ for finite $p \geq 1$, or in case $p = \infty$, $$ \lim_{n \to \infty} \|\theta p_n\|_\infty := \lim_{n \to \infty}\sup_{z \in \D} |\theta(z)p_n(z)| = \infty.$$
When other norms are considered, cyclic singular inner functions might exist, and here the Bergman spaces $L^p_a(\D)$ provide a famous set of examples. The Bergman norms are of the form $$\|g\|^p_{L^p(\D)} := \int_\D |g(z)|^p dA(z),$$ where $dA$ is the normalized area measure of $\D$. After a sequence of partial results by multiple authors, Boris Korenblum in \cite{korenblum1975extension} and James Roberts in \cite{roberts1985cyclic} independently characterized the cyclic singular inner functions in the Bergman spaces in terms of the vanishing on certain subsets of $\T$ of the corresponding singular measure $\nu$ appearing in \eqref{innerdef}. A construction of a singular inner function which is cyclic in the classical Bloch space appears in \cite{anderson1991inner}.

The purpose of this note is to discuss and apply a recent theorem of Thomas Ransford from \cite{ransford2021decay} which deals with singular inner functions which decay slowly near the boundary of the disk. Here is the statement of Ransford's theorem.

\begin{thm} \thlabel{pre-ransfordtheorem}
Let $w:[0,1) \to (0,1)$ be any function satisfying $\lim_{r \to 1^-} w(r) = 0$. Then there exists a singular inner function $\theta$ for which we have \begin{equation} \label{loweresttheta}
\min_{|z| < r} |\theta(z)| \geq w(r), \quad r \in (0,1).
\end{equation}
\end{thm}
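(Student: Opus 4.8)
The plan is to realize the required $\theta$ as a singular inner function $\theta(z)=\exp\big(-\int_\T\frac{\zeta+z}{\zeta-z}\,d\nu(\zeta)\big)$ and to reduce the whole statement to a question about the growth of the Poisson integral $P[\nu](z)=\int_\T\frac{1-|z|^2}{|\zeta-z|^2}\,d\nu(\zeta)$ of a well-chosen positive singular measure $\nu$. Since $\log(1/|\theta(z)|)=P[\nu](z)$ and $P[\nu]$ is a positive harmonic function, the maximum principle gives $\min_{|z|<r}|\theta(z)|=\exp(-M(r))$ with $M(r):=\max_{|z|=r}P[\nu](z)$, and $M$ is non-decreasing. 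Writing $W(r):=\log(1/w(r))$, inequality \eqref{loweresttheta} is therefore \emph{equivalent} to $M(r)\le W(r)$ for all $r$. Because $M(r)\ge P[\nu](0)=\nu(\T)$ for every $r$, a nonconstant $\theta$ can exist only when $\sup_r w(r)<1$; replacing $w$ by $\rho\mapsto\sup_{s\ge\rho}w(s)$, I may and do assume that $w$ is non-increasing with $w(0)<1$, so that $W$ is non-decreasing, strictly positive, and $W(r)\to\infty$. After passing to a continuous, slowly varying minorant $\tilde W\le W$ with $\tilde W\to\infty$ (a routine regularization), the goal becomes: construct a nonzero positive measure $\nu$, singular with respect to $\m$, whose Poisson maximum obeys $M(r)\le\tilde W(r)$.

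The second ingredient is a standard but decisive estimate of $P[\nu]$ by the arc-regularity of $\nu$. Suppose $\nu(I)\le\psi(|I|)$ for every arc $I\subset\T$, where $\psi$ is non-decreasing, $\psi(0^+)=0$, and $\psi(t)/t$ is non-increasing (so $\psi(2t)\le2\psi(t)$). Fixing $z=re^{i\phi}$ and $h=1-r$, I decompose $\T$ into the arc $I_0$ of length $2h$ centered at $e^{i\phi}$ and the dyadic coronas $A_j=\{\,2^{j-1}h<|\theta-\phi|\le 2^jh\,\}$. On $I_0$ the Poisson kernel is at most $2/h$, while on $A_j$ it is at most $C\,4^{-j}/h$; summing, and using $\nu(I_0)\le\psi(2h)\le2\psi(h)$ together with $\nu(A_j)\le\psi(2^{j+1}h)\le 2^{j+1}\psi(h)$, the geometric series collapses to
\[
P[\nu](z)\ \le\ C_0\,\frac{\psi(1-r)}{1-r}.
\]
Hence $M(r)\le C_0\,\psi(1-r)/(1-r)$, and it suffices to make the right-hand side at most $\tilde W(r)$. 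This is achieved by \emph{defining} the target gauge through $\psi(t):=C_0^{-1}\,t\,\tilde W(1-t)$: then $\psi(t)/t=C_0^{-1}\tilde W(1-t)$ is non-increasing and, because $\tilde W\to\infty$, satisfies the crucial relation $\psi(t)/t\to\infty$ as $t\to0^+$.

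Everything now rests on the final step, which I expect to be the main obstacle: \emph{given a gauge $\psi$ with $\psi(t)/t\to\infty$ arbitrarily slowly, construct a nonzero $\nu\perp\m$ with $\nu(I)\le C\psi(|I|)$ for all arcs $I$}. I would do this by a generalized Cantor construction. Choose scales $\ell_0=1>\ell_1>\ell_2>\cdots\to0$ and branching numbers $m_n\ge1$, at stage $n$ retaining $m_n$ equally spaced subintervals of length $\ell_n$ inside each surviving interval of length $\ell_{n-1}$ (so $m_n\ell_n<\ell_{n-1}$ leaves genuine gaps), and let $\nu$ be the resulting uniform limit measure, assigning mass $M_n^{-1}$ to each of the $M_n=m_1\cdots m_n$ intervals of level $n$. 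A direct count shows $\nu(I)\lesssim M_{n-1}^{-1}$ whenever $\ell_n\le|I|<\ell_{n-1}$, so the arc bound follows once $M_{n-1}^{-1}\lesssim\psi(\ell_n)$, while singularity requires $|E|=\lim_n M_n\ell_n=0$ for the limiting Cantor set $E=\bigcap_n E_n$.

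The tension between these two requirements is the heart of the matter: singularity forces $\nu(I)/|I|\to\infty$ on the support, so no uniform bound $\nu(I)\le C|I|$ is available, yet the slow growth of $\psi$ leaves only a narrow margin to exploit. The hypothesis $\psi(t)/t\to\infty$ is exactly what reconciles them: since $\psi(\ell_n)\gg\ell_n$, the lower bound $1/\psi(\ell_n)$ is of smaller order than $1/\ell_n$, so — after letting the scales $\ell_n$ decrease fast enough — one can pick integers $M_n$ meeting $M_{n-1}\psi(\ell_n)\gtrsim1$ and $M_n\ell_n\to0$ simultaneously. Carrying out this selection carefully, verifying the arc bound also at the non-construction scales, and making the elementary adjustments of total mass and of $\tilde W$ needed to pass from $M(r)\le C_0\psi(1-r)/(1-r)$ to $M(r)\le W(r)$ for \emph{all} $r\in(0,1)$, then completes the proof; feeding the resulting $\nu$ back into the singular inner function produces the desired $\theta$.
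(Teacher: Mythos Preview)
Your proposal is correct and follows precisely the approach the paper attributes to Ransford: the paper does not reproduce the proof but refers to \cite{ransford2021decay}, noting that it ``relies on an application of Frostman's classical construction of a measure with prescribed smoothness properties, and estimates of the Poisson kernel,'' which is exactly the reduction $\min_{|z|<r}|\theta(z)|=\exp(-\max_{|z|=r}P[\nu](z))$, the dyadic Poisson estimate $P[\nu](z)\le C\psi(1-|z|)/(1-|z|)$ from the arc-regularity $\nu(I)\le\psi(|I|)$, and the Cantor/Frostman construction of a singular $\nu$ satisfying such a gauge, that you carry out.
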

We refer the reader to the article \cite{ransford2021decay} for the clever proof of the statement, which relies on an application of Frostman's classical construction of a measure with prescribed smoothness properties, and estimates of the Poisson kernel. We will apply Ransford's theorem in a slightly different form than it is stated in \cite{ransford2021decay}. The following consequence of his theorem is the pivotal point of our discussion. 

\begin{cor} \thlabel{ransfordtheorem} Let $w:[0,1] \to (0,1)$ be any decreasing function satisfying $\lim_{t \to 1} w(t) = 0$. There exists a singular inner function $\theta = S_\nu$ and a sequence of analytic polynomials $\{p_n\}_n$ such that 
\begin{enumerate}[(i)]
    \item $\lim_{n \to \infty} \theta(z) p_n(z) = 1, \quad z \in \D,$
    \item $\sup_{z \in \D} |\theta(z) p_n(z)| w(|z|) \leq 2$.
\end{enumerate}
\end{cor}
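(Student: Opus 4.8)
The plan is to derive the corollary from Ransford's theorem by constructing polynomials that approximate $1/\theta$ on progressively larger disks, using the lower bound on $|\theta|$ to control the growth.

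The plan is to reduce the statement to a weighted polynomial approximation problem for $1/\theta$ and then solve it by combining \thref{pre-ransfordtheorem} with a dilation argument. Since a singular inner function $\theta = S_\nu$ has no zeros in $\D$, the function $q := 1/\theta$ is analytic on $\D$. For any polynomial $p$ one has $\theta p - 1 = \theta(p - q)$, so that $|\theta(z)p(z)| \leq 1 + |\theta(z)|\,|p(z) - q(z)| \leq 1 + |p(z) - q(z)|$. Consequently it suffices to produce polynomials $p_n$ with $p_n \to q$ pointwise on $\D$ (which yields (i), since then $\theta p_n \to \theta q = 1$) and with the weighted error bound $|p_n(z) - q(z)|\,w(|z|) \leq 1$ for every $z \in \D$; the latter gives (ii) because $|\theta p_n|\,w \leq w + |p_n - q|\,w \leq 2$, using $w \leq 1$. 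Thus the entire task becomes: approximate $q = 1/\theta$ by polynomials, pointwise, with error controlled by $1/w(|z|)$ uniformly in $z$.

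First I would choose the inner function so that $q$ does not grow too fast. Applying \thref{pre-ransfordtheorem} to the decreasing function $w_0 := \min(1/2,\, 4w)$ (which maps into $(0,1)$ and tends to $0$) produces a singular inner $\theta$ with $\min_{|z|<r}|\theta(z)| \geq w_0(r)$, hence $|\theta(z)| \geq w_0(|z|)$ and $|q(z)| \leq 1/w_0(|z|)$ on $\D$. Next I would take $p_n$ to be a Taylor polynomial of the dilation $q_{r_n}(z) := q(r_n z)$, where $r_n \uparrow 1$; since $q_{r_n}$ is analytic on a disk strictly larger than $\cD$, its degree can be chosen so large that $\|p_n - q_{r_n}\|_{L^\infty(\cD)} \leq \delta_n$ with $\delta_n \downarrow 0$ and $\delta_n \leq 1/2$. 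The decisive point is a monotonicity estimate: because $w_0$ is decreasing and $r_n|z| \leq |z|$, we get $|q(r_n z)| \leq 1/w_0(r_n|z|) \leq 1/w_0(|z|)$, and therefore $|q_{r_n}(z) - q(z)| \leq 2/w_0(|z|)$ for all $z \in \D$.

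It then remains to verify the weighted bound by splitting the disk. Pick $s_1 \in [0,1)$ with $w(s) \leq 1/8$ for $s \geq s_1$, so that $w_0 = 4w$ there. For $|z| \geq s_1$ the monotonicity estimate gives $|q_{r_n}(z) - q(z)|\,w(|z|) \leq 2w(|z|)/w_0(|z|) = 1/2$, and adding the approximation error $\delta_n \leq 1/2$ yields $|p_n(z) - q(z)|\,w(|z|) \leq 1$. For $|z| \leq s_1$ I would instead use that $p_n \to q$ uniformly on this compact set (since $\|p_n - q_{r_n}\|_{\cD} \to 0$ and $q_{r_n} \to q$ uniformly on compacta), so that $|p_n - q|\,w \leq \|p_n - q\|_{L^\infty(|z|\leq s_1)} \to 0$; discarding finitely many indices makes this $\leq 1$ as well. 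Together these give $|p_n - q|\,w \leq 1$ on all of $\D$, hence (ii), while pointwise convergence gives (i).

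The conceptual obstacle is that $\theta$ can never admit uniformly bounded cyclic multiples, so some boundary growth of $\theta p_n$ is unavoidable; this is exactly what \thref{pre-ransfordtheorem} circumvents by guaranteeing an inner function whose modulus decays as slowly as we wish, making $1/\theta$ tame enough to approximate. The only genuine work is then bookkeeping: verifying that $w_0 = \min(1/2, 4w)$ is a legitimate slowly-decaying input for Ransford's theorem, and arranging the center/boundary split so the final constant is exactly $2$. I expect the dilation trick to be what makes the argument cheap — it converts pointwise convergence into weighted-uniform control through the single inequality $|q_{r_n} - q| \leq 2/w_0$, thereby avoiding any delicate estimation of Taylor tails or polynomial degrees.
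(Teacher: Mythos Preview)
Your argument is correct and follows essentially the same strategy as the paper: apply \thref{pre-ransfordtheorem}, form the dilations $q_{r_n}(z)=1/\theta(r_nz)$, approximate them uniformly by polynomials, and exploit the monotonicity of the weight via $w_0(r_n|z|)\geq w_0(|z|)$. The paper's execution is a bit more economical: rather than bounding $|p_n-q|\,w$ (which forces your triangle inequality $|q_{r_n}-q|\leq 2/w_0$, the modified weight $w_0=\min(1/2,4w)$, and the center/boundary split), it applies Ransford directly to $w$ and estimates $|\theta p_n|\,w\leq(|\theta Q_n|+1/n)\,w\leq |Q_n|\,w+1$ using $|\theta|\leq 1$, so that the single inequality $|Q_n(z)|\,w(|z|)\leq w(|z|)/w(r_n|z|)\leq 1$ already gives the bound $2$ with no case distinction.
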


\begin{proof}
Apply Ransford's \thref{pre-ransfordtheorem} to the function $w$ to produce a singular inner function $\theta$ satisfying \eqref{loweresttheta}. For integers $n \geq 2$ we set $r_n := 1- 1/n$ and $Q_n(z) := 1/\theta(r_n z)$. Then $Q_n$ is holomorphic in a neighbourhood of the closed disk $\cD$, and because we are assuming that $w$ is decreasing, we have the estimate $$\sup_{z \in \D} |Q_n(z)|w(|z|) \leq \sup_{z \in \D} \frac{w(|z|)}{w(r_n|z|)} \leq 1.$$ We can approximate $Q_n$ by an analytic polynomial $p_n$ so that $$\sup_{z \in \D} |Q_n(z) - p_n(z)| \leq 1/n.$$
Then \begin{gather*}
    \sup_{z \in \D} |\theta(z) p_n(z)| w(|z|) \leq \sup_{z \in \D} \Big(|\theta(z) Q_n(z)| + 1/n \Big) w(|z|) \leq 2 
\end{gather*}
It is clear from the construction that $\theta(z)p_n(z) \to 1$ as $n \to \infty$, for any $z \in \D$.
\end{proof}
\thref{ransfordtheorem} says that there exists cyclic singular inner functions in essentially any space of analytic functions defined in terms of a growth condition, or in any space in which such a \textit{growth space} is continuously embedded. 

In the proofs of our main results, which will be stated shortly, we will concern ourselves with the following \textit{weak} type of cyclicity of singular inner functions. Let $Y$ be some linear space of analytic functions which is contained in $H^1$. We want to investigate if there exists a singular inner function $\theta$ and a sequence of polynomials $\{p_n\}_n$ such that \begin{equation}
    \label{weakcyclicity}
    f(0) = \int_\T f \, d\m = \lim_{n \to \infty} \int f \conj{\theta p_n} \,d\m
\end{equation} holds for all $f \in Y$. The above situation means that the sequence $\{\theta p_n\}_n$ converges to the constant $1$, weakly over the space $Y$. Now, clearly if $Y$ is too large of a space (say, $Y = H^2$), then \eqref{weakcyclicity} can never hold for all $f \in Y$. However, if $Y$ is sufficiently small, then the situation in \eqref{weakcyclicity} might occur. For instance, in the extreme case when $Y$ is a set of analytic polynomials, then any singular inner function $\theta$ and any sequence of polynomials $\{p_n\}_n$ which satisfies $\lim_{n \to \infty} p_n(z) = 1/\theta(z)$ for $z \in \D$, is sufficient to make \eqref{weakcyclicity} hold. Philosophically speaking, it is the uniform smoothness of the functions in the class $Y$ that allows the existence of singular inner functions $\theta$ for which the above situation occurs.

Such weak cyclicity of singular inner functions plays a role in the theory of approximations in model spaces $K_\theta$ and the broader class of de Branges-Rovnyak spaces $\hb$. Recall that the space $K_\theta$ is constructed from an inner function $\theta$ by taking the orthogonal complement of the subspace $$\theta H^2 := \{ \theta h : h \in H^2\}$$ in the Hardy space $H^2$: 
\begin{equation*}
    K_\theta = H^2 \ominus \theta H^2.
\end{equation*}
For background on the spaces $K_\theta$ one can consult the books \cite{cauchytransform} and \cite{garcia2016introduction}. 

Note that the integrals on the right-hand side in \eqref{weakcyclicity} represents the inner product in $H^2$ between a function $f$ and a polynomial multiple of $\theta$. Under insignificat assumptions on $Y$, a straightforward argument will show that if $\eqref{weakcyclicity}$ occurs, then the intersection between $Y$ and $K_\theta$ is trivial: $Y \cap K_\theta = \{0\}$. As a consequence of these observations, Ransford's theorem, and some further analysis, we will be able to construct model spaces $K_\theta$ which have trivial intersections with many examples of spaces $Y$, even ones defined by very mild smoothness or continuity conditions. Consequently, we will be able to construct spaces $K_\theta$ in which every non-zero function behaves \textit{badly} in some precise way.

In one of our main results, we will show that the famous approximation theorem of Aleksandrov from \cite{aleksandrovinv} on density in $K_\theta$ of functions which extend continuously to the boundary, is in fact essentially sharp, as it cannot be extended to any class of functions satisfying an estimate on their modulus of continuity. By a \textit{modulus of continuity} $\omega$ we mean a function $\omega: [0,\infty) \to [0, \infty)$ which is continuous, increasing, satisfies $\omega(0) = 0$, and for which $\omega(t)/t$ is a decreasing function with $$\lim_{t \to 0^+} \omega(t)/t = \infty.$$ For such a function $\omega$ we define $\Lambda^\omega_a$ to be the space of functions $f$ which are analytic in $\D$, extend continuously to $\cD$, and satisfy \begin{equation}\label{mocspacedef}
\sup_{z,w \in \cD, z \neq w} \frac{|f(z)-f(w)|}{\omega(|z-w|)} < \infty.
\end{equation} Then $\Lambda^\omega_a$ is the space of analytic functions on $\D$ which have a modulus of continuity dominated by $\omega$. We make $\Lambda^\omega_a$ into a normed space by introducing the quantity $$\|f\|_\omega := \|f\|_\infty + \sup_{z,w \in \cD, z \neq w} \frac{|f(z)-f(w)|}{\omega(|z-w|)}.$$ 
By a theorem of Tamrazov from \cite{tamrazov1973contour}, we could have replaced the supremum over $\cD$ by a supremum over $\T$, and obtain the same space of functions (we remark that a nice proof of this result is contained in \cite[Appendix A]{bouya2008closed}). One of our two main results is the following optimality statement regarding Aleksandrov's density theorem.

\begin{thm} \thlabel{mocnondensitymodelspaces}
Let $\omega$ be a modulus of continuity. There exists a singular inner function $\theta$ such that $$\Lambda^\omega_a \cap K_\theta = \{0\}.$$ 
\end{thm}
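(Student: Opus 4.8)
The plan is to construct a singular inner function $\theta$ for which $\Lambda^\omega_a \cap K_\theta = \{0\}$ by exploiting the weak cyclicity mechanism described in the excerpt. Recall the key observation stated there: if there is a singular inner function $\theta$ and a sequence of polynomials $\{p_n\}_n$ satisfying the weak cyclicity relation $f(0) = \lim_{n \to \infty} \int_\T f \conj{\theta p_n} \, d\m$ for all $f$ in some class $Y$, then $Y \cap K_\theta = \{0\}$. Indeed, any $f \in K_\theta$ is orthogonal to $\theta H^2$, hence $\int_\T f \conj{\theta p_n} \, d\m = 0$ for every $n$, forcing $f(0) = 0$; applying the same reasoning to the shifted functions $z^k f(z)$ (which remain in $K_\theta$ only after projection, so one must be slightly careful) or more directly to $\theta$-translated classes forces all Taylor coefficients of $f$ to vanish. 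So the heart of the matter is to establish the weak cyclicity relation \eqref{weakcyclicity} with $Y = \Lambda^\omega_a$.

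To do this I would invoke \thref{ransfordtheorem}, but the subtlety is choosing the right growth function $w$. The polynomials $p_n$ furnished by the corollary satisfy $\theta p_n \to 1$ pointwise in $\D$ together with the growth bound $\sup_{z} |\theta(z) p_n(z)| w(|z|) \leq 2$. To pass from this to the \emph{weak} convergence $\int_\T f \conj{\theta p_n} \, d\m \to f(0)$ for $f \in \Lambda^\omega_a$, I would write the integral over circles $|z| = r$ and let $r \to 1^-$, using that $f$ is continuous on $\cD$ while $\theta p_n$ satisfies a controlled growth toward $\T$. The mechanism is a dominated-convergence or uniform-integrability argument: the smoothness of $f$ (its modulus of continuity bounded by $\omega$) must compensate for the growth $1/w$ of $\theta p_n$ near the boundary. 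Concretely, I expect one wants $w$ tied to $\omega$ so that the product $\omega(1-r)/w(r)$ (or a similar quantity controlling $|f(z) - f(\zeta)|$ against the blow-up of $\theta p_n$) stays bounded and the error vanishes as $n \to \infty$. Thus the choice $w(r) \approx \omega(1-r)$, or a slightly faster-decaying variant, should be the correct coupling.

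The main obstacle is the interchange of limits and the boundary estimate. The functions $\theta p_n$ do not converge to $1$ uniformly on $\cD$ — their growth near $\T$ is precisely what is being permitted — so one cannot simply pass the limit inside $\int_\T f \conj{\theta p_n} \, d\m$. I would split $f = f(0) + (f - f(0))$; the constant part contributes $f(0) \overline{\int_\T \theta p_n \, d\m} = f(0) \overline{(\theta p_n)(0)} \to f(0)$ by the pointwise convergence at the origin, using that the mean value of $\theta p_n$ over $\T$ equals its value at the center. For the remaining part $g := f - f(0)$, which vanishes at the origin and whose boundary modulus of continuity is still controlled by $\omega$, I must show $\int_\T g \conj{\theta p_n} \, d\m \to 0$. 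Here I would exploit that $g$ vanishes at $\zeta_0$-type points only weakly, so instead the argument should use the integral representation and an estimate of the form $|g(\zeta)| \leq C \, \omega(\text{dist}(\zeta, \text{high-density points of } \nu))$ combined with the concentration of $\theta$'s singular measure $\nu$. This is where the fine structure of the Ransford construction and the exact growth of $w$ enters, and making this estimate rigorous — quantifying how the smoothness $\omega$ dominates the boundary blow-up of $\overline{\theta p_n}$ — is the crux of the proof.

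Once the weak cyclicity \eqref{weakcyclicity} is secured for $Y = \Lambda^\omega_a$, the conclusion $\Lambda^\omega_a \cap K_\theta = \{0\}$ follows from the orthogonality argument sketched above: a nonzero $f \in \Lambda^\omega_a \cap K_\theta$ would satisfy $\int_\T f \conj{\theta p_n}\, d\m = 0$ for all $n$ yet have $\int_\T f \conj{\theta p_n}\, d\m \to f(0)$, forcing $f(0) = 0$, and then applying the argument to $z^k f \in \Lambda^\omega_a$ together with the structure of $K_\theta$ under multiplication by $\conj{z}$ forces every Taylor coefficient of $f$ to vanish, so $f \equiv 0$.
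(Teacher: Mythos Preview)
Your overall framework matches the paper's: use \thref{ransfordtheorem} to produce $\theta$ and $\{p_n\}$, then show the weak-cyclicity relation \eqref{weakcyclicity} holds across $\Lambda^\omega_a$, and conclude via orthogonality of $K_\theta$ to $\theta H^2$. The gap is in the middle step. Your proposed mechanism for proving $\int_\T g\,\conj{\theta p_n}\,d\m \to 0$ for $g=f-f(0)$ does not work: the growth bound $|\theta(z)p_n(z)|\le 2/w(|z|)$ carries no information on $\T$ (where $w=0$), and the estimate $|g(\zeta)|\le C\,\omega(\dist{\zeta}{\supp{\nu}})$ you invoke is unfounded, since $g$ has no reason to be small near the singular support of $\theta$. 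The coupling ``$w(r)\approx\omega(1-r)$'' is suggestive but by itself gives no dominated-convergence statement on $\T$, and the interchange of limits you flag as the obstacle is never resolved.

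The paper circumvents this by moving the entire estimate \emph{into the disk}. First it shows (\thref{mocspecestimatelemma}) that the modulus-of-continuity condition forces a quantitative Taylor-coefficient decay: $\Lambda^\omega_a \hookrightarrow H^2_{\bm{\alpha}}$ for some increasing $\alpha_n\to\infty$. This is the missing idea. One then writes $\int_\T f\,\conj{g}\,d\m=\sum_k f_k\conj{g_k}$ with $g=\theta p_n-1$, applies Cauchy--Schwarz with weights $\alpha_k$ and $1/\alpha_k$, and uses a moment construction (\thref{momentlemma}, \thref{multiplier}) to dominate $\sum_k |g_k|^2/\alpha_k$ by a weighted \emph{area} integral $\int_\D |g|^2\Lambda\,dA$. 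Now dominated convergence on $\D$ applies cleanly, because $|g|^2\Lambda\le 4\Lambda/w^2\in L^1(\D)$ uniformly in $n$ and $g\to 0$ pointwise. Finally, the vanishing of all Taylor coefficients is obtained not via $z^kf$ (which, as you note, leaves $K_\theta$) but via backward-shift invariance of both $K_\theta$ and $H^2_{\bm{\alpha}}$, reducing to the case $f(0)\neq 0$.
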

This statement will be proved in Section \ref{aleksandrovoptimality}. In fact, we will see that \thref{mocnondensitymodelspaces} is a consequence of a variant, and in some directions a strengthening, of a theorem of Dyakonov and Khavinson from \cite{starinvsmooth}. For a sequence of positive numbers $\bm{\lambda} = \{\lambda_n\}_{n=0}^\infty $ we define the class \begin{equation}
    \label{H2wdef}
    H^2_{\bm{\lambda}} = \Big\{ f = \sum_{n=0}^\infty f_n z^n \in \hd : \sum_{n=0}^\infty \lambda_n|f_n|^2 < \infty \Big\}.
\end{equation}

The other of our main results, proved in Section \ref{badmodelspacesection}, reads as follows.

\begin{thm} \thlabel{badmodelspace}
Let $\bm{\lambda} = \{\lambda_n\}_{n=0}^\infty$ be any increasing sequence of positive numbers with $\lim_{n \to \infty} \lambda_n = \infty$. Then there exists a singular inner function $\theta$ such that $$K_\theta \cap H^2_{\bm{\lambda}} = \{0\}.$$
\end{thm}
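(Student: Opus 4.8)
The plan is to derive the triviality of the intersection from the weak cyclicity relation \eqref{weakcyclicity} applied with $Y=H^2_{\bm{\lambda}}$, combined with a backward-shift bootstrapping that upgrades ``$f(0)=0$'' to ``$f\equiv0$''. Concretely, I would first produce, via \thref{ransfordtheorem} for a weight $w$ to be chosen, a singular inner function $\theta=S_\nu$ and polynomials $\{p_n\}_n$ with
\[
f(0)=\lim_{n\to\infty}\int_\T f\,\overline{\theta p_n}\,d\m,\qquad f\in H^2_{\bm{\lambda}}.
\]
Granting this, take $f\in K_\theta\cap H^2_{\bm{\lambda}}$. Since $f\in K_\theta=H^2\ominus\theta H^2$, the inner product $\int_\T f\,\overline{\theta p_n}\,d\m=\langle f,\theta p_n\rangle_{H^2}$ vanishes for every $n$, so the relation forces $f(0)=0$. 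Now $K_\theta$ is invariant under the backward shift $S^*g=(g-g(0))/z$, so $f/z=S^*f\in K_\theta$; and because $\bm{\lambda}$ is increasing, $\sum_j\lambda_j|f_{j+1}|^2\le\sum_{k\ge1}\lambda_k|f_k|^2<\infty$, whence $f/z\in H^2_{\bm{\lambda}}$ as well. Iterating, $f/z^m\in K_\theta\cap H^2_{\bm{\lambda}}$ for every $m$, and the relation yields $f_m=0$ for all $m$, so $f\equiv0$. Thus the theorem reduces to the displayed weak cyclicity statement.

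To establish that statement I would expand $\theta p_n=\sum_k c^{(n)}_k z^k$, so that $\int_\T f\,\overline{\theta p_n}\,d\m=\sum_k f_k\overline{c^{(n)}_k}$, and estimate the functionals $L_n(f):=\int_\T f\,\overline{\theta p_n}\,d\m$ by Cauchy--Schwarz in the weighted norm:
\[
|L_n(f)|\le\|f\|_{H^2_{\bm{\lambda}}}\Big(\textstyle\sum_k|c^{(n)}_k|^2/\lambda_k\Big)^{1/2}.
\]
Since the polynomials are dense in $H^2_{\bm{\lambda}}$ and $f\mapsto f(0)$ is continuous there, it suffices to prove (a) $L_n(p)\to p(0)$ for every polynomial $p$, and (b) $\sup_n\sum_k|c^{(n)}_k|^2/\lambda_k<\infty$; the conclusion then follows from the standard fact that a uniformly bounded sequence of functionals converging on a dense set converges everywhere. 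Point (a) is routine: by \thref{ransfordtheorem}(i) we have $\theta p_n\to1$ pointwise on $\D$, dominated locally uniformly by $|\theta(z)p_n(z)|\le2/w(|z|)$ from part (ii), so bounded convergence in Cauchy's formula gives $c^{(n)}_k r^k\to\delta_{k,0}$ for each fixed $r$, hence $L_n(z^k)=\overline{c^{(n)}_k}\to\delta_{k,0}=z^k|_{z=0}$.

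The crux, and the step I expect to be the main obstacle, is the uniform bound (b), the difficulty being that $\bm{\lambda}$ may tend to infinity very slowly while \thref{ransfordtheorem} forces $w\to0$. A per-coefficient estimate $|c^{(n)}_k|\lesssim 1/w(1-1/k)$ leads to the requirement $\sum_k(\lambda_k w(1-1/k)^2)^{-1}<\infty$, which is hopeless in general: if $\sum_k1/\lambda_k=\infty$, then since $w(1-1/k)^2\to0$ forces the terms to eventually dominate $1/\lambda_k$, the series must diverge. The remedy is to discard per-coefficient bounds in favor of the aggregate estimate $\sum_k|c^{(n)}_k|^2 r^{2k}=\frac{1}{2\pi}\int_0^{2\pi}|(\theta p_n)(re^{i\phi})|^2\,d\phi\le4/w(r)^2$, valid uniformly in $n$ for every $r\in(0,1)$, together with a block decomposition adapted to $\bm{\lambda}$. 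Using $\lambda_k\to\infty$, I would select indices $N_1<N_2<\cdots$ with $\lambda_{N_j}\ge8^j$ and estimate the block $N_j\le k<N_{j+1}$ at radius $r_j=1-1/N_{j+1}$:
\[
\sum_{N_j\le k<N_{j+1}}\frac{|c^{(n)}_k|^2}{\lambda_k}\le\frac{1}{\lambda_{N_j}}\,r_j^{-2N_{j+1}}\sum_k|c^{(n)}_k|^2 r_j^{2k}\le\frac{C}{\lambda_{N_j}\,w(r_j)^2},
\]
since $r_j^{-2N_{j+1}}=(1-1/N_{j+1})^{-2N_{j+1}}$ stays bounded. Finally I would let $w$ decrease to $0$ slowly along these scales, e.g. $w(1-1/N_{j+1})=2^{-j/2}$, so the $j$-th block is $\lesssim 8^{-j}2^{j}=4^{-j}$ and the full sum is finite independently of $n$. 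Feeding this $w$ into \thref{ransfordtheorem} yields the required $\theta$ and closes the argument.
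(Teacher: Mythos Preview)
Your argument is correct and shares the paper's overall architecture: reduce to the weak cyclicity relation \eqref{weakcyclicity} on $H^2_{\bm{\lambda}}$ via Cauchy--Schwarz with the dual weights $\lambda_k$ and $1/\lambda_k$, choose $w$ adapted to $\bm{\lambda}$, feed it into \thref{ransfordtheorem}, and then iterate the backward shift. The technical execution of the key estimate, however, differs. The paper introduces an auxiliary radial weight $\Lambda\in L^1([0,1])$ whose moments dominate $1/\lambda_k$ (\thref{momentlemma}), so that $\sum_k |g_k|^2/\lambda_k$ is controlled by the area integral $\int_\D|g|^2\Lambda(|z|)\,dA$; it then picks $w$ via \thref{multiplier} so that $\Lambda/w^2$ is integrable, and a single application of dominated convergence drives this integral to zero, giving $f(0)=0$ directly for every $f\in K_\theta\cap H^2_{\bm{\lambda}}$. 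You instead bound $\sum_k|c^{(n)}_k|^2/\lambda_k$ by a dyadic block decomposition, using the $L^2$-mean estimate on circles and prescribing $w$ only at the radii $r_j=1-1/N_{j+1}$; this yields uniform boundedness of the functionals $L_n$, and you conclude by density of polynomials plus convergence on monomials. Your route is more hands-on and avoids the two auxiliary lemmas; the paper's route packages the same trade-off (the slower $\lambda_n\to\infty$, the slower one lets $w\to0$) into reusable statements and replaces your uniform-boundedness-plus-density step by a direct dominated-convergence argument.
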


The result can be compared to the mentioned result of Dyakonov and Khavinson in \cite{starinvsmooth}, from which the above result can be deduced in the special case $\bm{\lambda} = \{ (k+1)^{\alpha}\}_{k=0}^\infty$ with any $\alpha > 0$. 

The theory of de Branges-Rovnyak spaces $\hb$ is a well-known generalization of the theory of model spaces $K_\theta$. The symbol of the space $b$ is now any analytic self-map of the unit disk, and we have $\hb = K_b$ whenever $b$ is inner. For background on $\hb$ spaces, see \cite{sarasonbook}, or \cite{hbspaces1fricainmashreghi} and \cite{hbspaces2fricainmashreghi}. A consequence of the authors work in collaboration with Alexandru Aleman in \cite{dbrcont} is that the above mentioned density theorem of Aleksandrov generalizes to the broader class of $\hb$ spaces: any such space admits a dense subset of functions which extend continuously to the boundary. Since \thref{mocnondensitymodelspaces} proves optimality of Aleksandrov's theorem for inner functions $\theta$, one could ask if at least for outer symbols $b$ any improvement of the density result in $\hb$ from \cite{dbrcont} can be obtained. In Section \ref{hboptimalitysec} we remark that this is not the case, and the result in \cite{dbrcont} is also essentially optimal, even for outer symbols $b$. This is shown to be equivalent to a theorem of Khrushchev from \cite{khrushchev1978problem}. 

In the last Section \ref{questionsec} we list a few questions we have not found an answer for, and some ideas for further research. 

\section{Proof of \thref{badmodelspace}}

\label{badmodelspacesection}

In the proof of the theorem we will need to use the following crude construction of an integrable weight with large moments.

\begin{lemma} \thlabel{momentlemma}
Let $\{\lambda_n\}_{n=0}^\infty$ be a decreasing sequence of positive numbers with $\lim_{n \to \infty} \lambda_n = 0$. There exists a non-negative function $\Lambda \in L^1([0,1])$ which satisfies $$\lambda_n \leq \int_{0}^1 x^{2n+1} \Lambda(x) \,dx, \quad n \geq 0.$$
\end{lemma}

\begin{proof}
Recall that the sequence $(1-1/n)^n = \exp(n\log(1-1/n))$ is decreasing and satisfies $$\lim_{n \to \infty} (1-1/n)^n = e^{-1}.$$ It follows that $$\inf_{x \in (1-1/n, 1)} x^{2n+1} \geq \alpha$$ for some constant $\alpha > 0$ which is independent of $n$. For $n \geq 1$, we define the intervals $I_n = \big( 1 - 1/n, 1 - 1/(n+1) \big)$. Our function $\Lambda$ will be chosen to be of the form $$\Lambda(x) = \sum_{n=0}^\infty 1_{I_n} c_n,$$ where $1_{I_n}$ is the indicator function of the interval $I_n$ and the $c_n$ are positive constants to be chosen shortly. Note that \begin{gather} \label{firstest} \int_0^1 x^{2N+1} \Lambda(x) \, dx \geq \int_{1-1/N}^1 x^{2N+1} \Lambda(x) \, dx \geq \alpha \sum_{n = N}^\infty |I_n| c_n. \end{gather} We choose $$c_n = \alpha^{-1}|I_n|^{-1}(\lambda_n - \lambda_{n+1}).$$ This choice of coefficients $c_n$ makes $\Lambda$ integrable over $[0,1]:$ \begin{gather*} 
\int_{0}^1 \Lambda(x) dx = \sum_{n=1}^\infty |I_n|c_n = \alpha^{-1} \sum_{n=1}^\infty \lambda_n - \lambda_{n+1} \\ =
\lim_{M \to \infty} \alpha^{-1} \sum_{n=1}^M \lambda_n - \lambda_{n+1} = \lim_{M \to \infty} \alpha^{-1} (\lambda_1 - \lambda_{M+1}) \\ = \alpha^{-1} \lambda_1 \end{gather*} In the last step we used the assumption that the sequence $\{\lambda_n\}_n$ converges to zero. Moreover, by \eqref{firstest} and the choice of $c_n$ we can estimate 
\begin{gather*}\int_0^1 x^{2N+1} \Lambda(x) \, dx \geq \alpha \sum_{n=N}^\infty |I_n|c_n \\ = \lim_{M \to \infty} \alpha \sum_{n=N}^M |I_n| c_n = \lim_{M \to \infty} \sum_{n=N}^\M \lambda_n -\lambda_{n+1} \\ = \lim_{M \to \infty} \lambda_N - \lambda_{M+1}  = \lambda_N.\end{gather*}  The proof is complete. 
\end{proof}

The significance of the above lemma is the estimate \begin{equation} \label{Lambdanorm}
    \sum_{k=0}^\infty \lambda_k|f_k|^2 \leq c \int_\D |f(z)|^2 \Lambda(|z|) dA(z)
\end{equation} for some numerical constant $c > 0$ and any function $f$ which is holomorphic in a neighbourhood of the closed disk $\cD$. The estimate can be vereified by direct computation of the integral on the right-hand side, using polar coordinates.

We will also use the following well-known construction.

\begin{lemma} \thlabel{multiplier}
For any function $g \in L^1([0,1])$ there exists a positive and increasing function $w: [0,1) \to \mathbb{R}$ which satisfies $$\lim_{t \to 1^-} w(t) = \infty$$ and $$wg \in L^1([0,1]).$$
\end{lemma}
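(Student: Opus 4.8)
The plan is to exploit the only information we have about $g$, namely that the tail of its integral decays. Define the decreasing function $G(t) := \int_t^1 |g(s)| \, ds$. Since $g \in L^1([0,1])$, dominated convergence (applied to $1_{[t,1]}|g|$ as $t \to 1^-$) gives $\lim_{t \to 1^-} G(t) = 0$. The weight $w$ will be constructed so that its growth is slow enough to be absorbed by this decay.

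Concretely, I would first pick an increasing sequence $t_0 = 0 < t_1 < t_2 < \cdots$ with $t_n \to 1$ and $G(t_n) \leq 2^{-n}$ for each $n \geq 1$. Such a choice is available because $G$ is decreasing with limit $0$: for each $n$ there is a threshold $\tau_n < 1$ beyond which $G \leq 2^{-n}$, and one may take any $t_n$ with $t_n > t_{n-1}$ and $t_n \geq \max(\tau_n, 1 - 1/n)$, the latter constraint also forcing $t_n \to 1$. Then define $w$ to be the step function $w(t) := n+1$ for $t \in [t_n, t_{n+1})$. This $w$ maps $[0,1)$ into $\mathbb{R}$, is positive and non-decreasing, and since $w(t) \geq n+1$ whenever $t \geq t_n$ while $t_n \to 1$, we get $\lim_{t \to 1^-} w(t) = \infty$.

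The integrability of $wg$ is then a direct estimate. Splitting the integral over the intervals and using $\int_{t_n}^{t_{n+1}} |g| \leq G(t_n)$, we obtain $\int_0^1 w(t)|g(t)|\, dt = \sum_{n \geq 0} (n+1) \int_{t_n}^{t_{n+1}} |g| \leq \sum_{n \geq 0} (n+1) G(t_n)$, and bounding the $n=0$ term by $\|g\|_1$ and the terms $n \geq 1$ by $(n+1)2^{-n}$ shows the series converges. Hence $wg \in L^1([0,1])$.

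The step that carries the actual idea, rather than routine verification, is the coupling of the two scales: the geometric tolerances $2^{-n}$ must decay fast enough to defeat the linear growth of $w$, while $w$ is still required to tend to infinity. This is exactly the balance the choice $G(t_n) \leq 2^{-n}$ achieves, and everything else is bookkeeping; in particular the construction is insensitive to whether $g$ vanishes near $1$, since then the relevant $G(t_n)$ are simply zero. If a strictly increasing or continuous $w$ were desired instead of a staircase, I would replace the step function by a piecewise-linear interpolant through the same corner values, which leaves the estimate unchanged.
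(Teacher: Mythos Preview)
Your proof is correct and follows essentially the same approach as the paper: both define a step-function weight on a nested sequence of intervals chosen so that the tail integrals $\int_{t_n}^1 |g|$ decay geometrically, then verify integrability by summing a convergent series. The only cosmetic difference is that the paper takes heights $2^n$ against tails of size $4^{-n}$, whereas you take heights $n+1$ against tails of size $2^{-n}$.
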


\begin{proof} The integrability condition on $g$ implies that $$\lim_{t \to 1^-} \int_{t}^1 |g(x)| dx = 0.$$
Thus there exists a sequence of intervals $\{I_n\}_{n=1}^\infty$ which have $1$ as the right end-point and which satisfy $I_{n+1} \subset I_n$ for all $n \geq 1$, and $$\int_{I_n} |g(x)| dx \leq 4^{-n}.$$ If we set $$w(t) = 1_{[0,1) \setminus I_1} +  \sum_{n=1}^\infty 2^n 1_{I_n \setminus I_{n+1}},$$ where $1_{I_n \setminus I_{n+1}}$ is the indicator function of the set difference $I_n \setminus I_{n+1}$, then $w$ is increasing, satisfies $\lim_{t \to 1^-} w(t) = \infty$, and $$\int_{I_n \setminus I_{n+1}} w(x)|g(x)| dx \leq 2^{-n}$$ for all $n \geq 1$. Consequently $$\int_{0}^1 w(x)|g(x)|dx \leq \int_{0}^1 |g(x)|dx + \sum_{n} \int_{I_n \setminus I_{n+1}} w(x)|g(x)| dx < \infty.$$
\end{proof}

\begin{proof}[Proof of \thref{badmodelspace}]
Let $\Lambda$ be the function in \thref{momentlemma} which corresponds to the sequence $\{1/\lambda_n\}_{n=0}^\infty$. That is, $\Lambda$ satisfies $$\frac{1}{\lambda_n} \leq \int_0^1 x^{2n+1} \Lambda(x) \, dx, \quad n \geq 0,$$ and $\Lambda \in L^1[0,1]$. Now, let $w$ be a positive decreasing function which satisfies $\lim_{x \to 1^-} w(x) = 0$ and $$\int_0^1 \frac{\Lambda(x)}{w^2(x)} \,dx < \infty.$$ Existence of such a function follows readily from \thref{multiplier}. Apply \thref{ransfordtheorem} to $w$ and obtain a corresponding inner function $\theta$ and a sequence of polynomials $\{p_n\}_n$ for which the conclusions $(i)$ and $(ii)$ of \thref{ransfordtheorem} hold. We will show that for this $\theta$ we have $K_\theta \cap H^2_{\bm{\lambda}} = \{0\}.$

Indeed, assume $f \in K_\theta \cap H^2_{\bm{\lambda}} = \{0\},$ but that in fact $f$ is non-zero. Since both $K_\theta$ and $H^2_{\bm{\lambda}}$ are invariant for the backward shift operator, we may without loss of generality assume that $f(0) \neq 0$. Fix an integer $n$ and let \begin{equation}
    \label{gdef} g(z) = \theta(z)p_n(z) - 1, z \in \D.
\end{equation} Let $\{f_k\}_k, \{g_k\}_k$ be the sequences of Taylor coefficients of $f$ and $g$, respectively. Since $f \in K_\theta$, we have
\begin{gather*}
    |f(0)| = \Big\vert \int_\T f \, d\m \Big\vert =  \Big\vert \int_\T f\conj{\theta p_n - 1} \, d\m\Big\vert = \lim_{r \to 1^-} \Big\vert \sum_{k=0}^\infty r^{2k} f_k \conj{g_k} \Big\vert \\ \leq \limsup_{r \to 1^-}\Big( \sum_{k=0}^\infty \lambda_k r^{2k} |f_k|^2 \Big)^{1/2} \Big( \sum_{k=0}^\infty \frac{1}{\lambda_n} |r^k g_k|^2 \Big)^{1/2} \\ 
\end{gather*}
Using inequality \eqref{Lambdanorm} on the term on the right-hand side in the last expression (with $\lambda_n$ replaced by $1/\lambda_n$), we obtain

\begin{gather*}
    |f(0)| \leq  C \limsup_{r \to 1^-} \Big( \sum_{k=0}^\infty \lambda_k  |f_k|^2 \Big)^{1/2} \Big(\int_\D |g(rz)|^2 \Lambda(|z|)d A(z) \Big)^{1/2} \\ = C \Big( \sum_{k=0}^\infty \lambda_k  |f_k|^2 \Big)^{1/2} \Big(\int_\D |g(z)|^2 \Lambda(|z|)d A(z) \Big)^{1/2}.
\end{gather*}
By assertion in part $(ii)$ of \thref{ransfordtheorem}, the function $|g(z)|^2\Lambda(|z|)$ is dominated pointwise in $\D$ by the integrable function $$\frac{4\Lambda(|z|)}{w^2(|z|)}, \quad z \in \D$$ independently of which polynomial $p_n$ is used to defined $g$ in \eqref{gdef}. But if we let $n \to \infty$ in \eqref{gdef}, then $|g(z)|^2\Lambda(|z|) \to 0$, and so we infer from the computation above and the dominated convergence theorem that $f(0) = 0$, which is a contradiction. The conclusion is that $K_\theta \cap H^2_{\bm{\lambda}} = \{ 0\}$, and the proof of the theorem is complete.
\end{proof}

\section{Proof of \thref{mocnondensitymodelspaces}}

\label{aleksandrovoptimality}

\thref{mocnondensitymodelspaces} will follow immediately from \thref{badmodelspace} together with the following embedding result for the spaces $\Lambda^\omega_a$.

\begin{lemma} \thlabel{mocspecestimatelemma}
Let $\omega$ be a modulus of continuity. There exists an increasing sequence of positive numbers $\bm{\alpha} = \{ \alpha_n\}_{n=0}^\infty$ satisfying $\lim_{n \to \infty} \alpha_n = \infty$ and such that for any $f \in \Lambda^\omega_a$ we have the estimate \begin{equation} \label{specembedding}
\sum_{n=0} \alpha_n|f_n|^2 \leq C \|f\|^2_\omega
\end{equation} where $C > 0$ is a numerical constant and $\{f_n\}_n$ is the sequence of Taylor coefficients of $f$.
\end{lemma}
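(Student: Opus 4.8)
The plan is to reduce the statement to a tail estimate on the Taylor coefficients of the form $\sum_{n \geq N}|f_n|^2 \leq C\|f\|_\omega^2\,\omega(2/N)^2$, and then to manufacture the weights $\alpha_n$ from this estimate by a summation-by-parts argument.

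First I would record the coefficient estimate. Passing to the boundary (using the theorem of Tamrazov quoted above, so that the seminorm may be computed on $\T$), every $f\in\Lambda^\omega_a$ satisfies, for rotations, $\|f(e^{ih}\,\cdot\,)-f\|_{L^2(\T)}\leq\|f(e^{ih}\,\cdot\,)-f\|_\infty\leq\|f\|_\omega\,\omega(|e^{ih}-1|)\leq\|f\|_\omega\,\omega(|h|)$, since $|e^{ih}-1|\leq|h|$ and $\omega$ is increasing. By Parseval, $\|f(e^{ih}\,\cdot\,)-f\|_{L^2}^2=\sum_n|f_n|^2|e^{inh}-1|^2$. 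I would then average this identity over $h\in[0,\delta]$ with $\delta=2/N$: a direct computation gives $\tfrac1\delta\int_0^\delta|e^{inh}-1|^2\,dh=2-\tfrac{2\sin(n\delta)}{n\delta}\geq1$ whenever $n\delta\geq2$, i.e. whenever $n\geq N$. Since $\omega$ is increasing, the averaged right-hand side is at most $\|f\|_\omega^2\,\omega(\delta)^2$, and discarding the non-negative terms with $n<N$ on the left yields the tail bound with $\gamma_N:=\omega(2/N)^2$.

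With the tail bound $T_N:=\sum_{n\geq N}|f_n|^2\leq C\|f\|_\omega^2\gamma_N$ in hand, and noting that $\gamma_N$ decreases to $0$, I would construct the weights. The point is that the naive pointwise bound $|f_n|\lesssim\|f\|_\omega\,\omega(1/n)$ is too weak, since $\sum_n\omega(1/n)^2$ may diverge, whereas the tail estimate is exactly summable after a reweighting. Concretely, I would pick indices $N_1<N_2<\cdots$ with $\gamma_{N_j}\leq 2^{-j}$ and set $\alpha_n=j$ for $N_j\leq n<N_{j+1}$; then $\alpha_n$ is increasing, tends to $\infty$, and its increments satisfy $\sum_n(\alpha_n-\alpha_{n-1})\gamma_n=\sum_j\gamma_{N_j}\leq\sum_j2^{-j}<\infty$. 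This is the discrete counterpart of \thref{multiplier}, which could be invoked instead.

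Finally I would combine the two ingredients by Abel summation. Writing $|f_n|^2=T_n-T_{n+1}$,
\[
\sum_{n=0}^\infty\alpha_n|f_n|^2=\alpha_0T_0+\sum_{n\geq1}(\alpha_n-\alpha_{n-1})T_n\leq C\|f\|_\omega^2\Big(\alpha_0+\sum_{n\geq1}(\alpha_n-\alpha_{n-1})\gamma_n\Big),
\]
the boundary term vanishing because $\alpha_nT_{n+1}\leq C\|f\|_\omega^2\,\alpha_n\gamma_{n+1}\to0$ by the choice of $\alpha_n$. The right-hand side is $C'\|f\|_\omega^2$, which is \eqref{specembedding}. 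The only genuinely delicate point is the tail estimate in the first step; the construction of the weights and the summation by parts are routine once it is available.
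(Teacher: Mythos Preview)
Your proof is correct and follows essentially the same strategy as the paper: obtain a tail bound $\sum_{n\ge N}|f_n|^2\lesssim\|f\|_\omega^2\cdot(\text{quantity }\to0)$, then build block-constant weights $\alpha_n$ from it. The only differences are in implementation: the paper compares $f(\zeta)$ with the radial dilation $f(r\zeta)$ and uses $\sum_n(1-r^{2n})|f_n|^2\le\omega(1-r)^2\|f\|_\omega^2$, whereas you compare with the rotation $f(e^{ih}\zeta)$ and average in $h$; and the paper sums the block estimates directly with weights $2^N$, whereas you package the same summation via Abel's formula with weights $j$. Neither variant is harder than the other. One small remark: your appeal to Tamrazov is unnecessary, since rotations keep you on $\T\subset\overline{\D}$ and the $\Lambda^\omega_a$-seminorm as defined already controls $|f(e^{ih}\zeta)-f(\zeta)|$.
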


\begin{proof}
For each $r \in (0,1)$ we have the estimate \begin{equation} \label{valuestospectrum}
\sum_{n=0} (1-r^{2n})|f_n|^2 = \int_{\T} |f(\zeta) - f(r\zeta)|^2 d\m(\zeta) \leq \omega(1-r)^2 \|f\|^2_\omega.
\end{equation} Since $\lim_{t \to 0} \omega(t) = 0$, for each positive integer $N$ there exists a number $r_N \in (0,1)$ such that $\omega(1-r_N) \leq \frac{1}{2^N}$. Since $\lim_{n \to \infty} r_N^{2n} = 0$, there exists an integer $K(N)$  such that $r_N^{2n} < 1/2$ for $n \geq K(N)$. Then \begin{equation*}
    \sum_{n = K(N)}^\infty \frac{|f_n|^2}{2} \leq \sum_{n=K(N)}^\infty (1-r_N^{2n})|f_n|^2 \leq \frac{1}{4^N}\|f\|^2_\omega. 
\end{equation*} Consequently \begin{equation} \label{specpartialestimate}
    \sum_{n=K(N)} 2^N |f_n|^2 \leq \frac{1}{2^{N-1}} \|f\|^2_\omega.
\end{equation} We can clearly choose the sequence of integers $K(N)$ to be increasing with $N$. If we define the sequence $\bm{\alpha}$ by the equation $\alpha_n = 1$ for $n \leq K(1)$, and $\alpha_n = 2^n$ for $K(N) \leq n < K(N+1)$, then \eqref{specembedding} follows readily from \eqref{specpartialestimate} by summing over all $N \geq 1$.
\end{proof}

\begin{proof}[Proof of \thref{mocnondensitymodelspaces}]
\thref{mocspecestimatelemma} implies that $\Lambda^\omega_a$ is contained in some space of the form $H^2_{\bm{\alpha}}$ as defined in \eqref{H2wdef}. If $\theta$ is a singular inner function given by \thref{badmodelspace} such that $H^2_{\bm{\alpha}} \cap K_\theta = \{0\}$, then obviously we also have that $\Lambda^\omega_a \cap K_\theta = \{0\}$, and so the claim follows.
\end{proof}

\section{The case of $\hb$ spaces}

\label{hboptimalitysec}

Here we prove the optimality of the continuous approximation theorem for the larger class of $\hb$ spaces.

\begin{prop}
Let $\omega$ be a modulus of continuity. There exists an outer function $b:\D \to \D$ such that $$\Lambda^\omega_a \cap \hb = \{ 0 \}.$$
\end{prop}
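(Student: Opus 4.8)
The plan is to reduce the statement about $\hb$ spaces to something we can control via the same weak-cyclicity mechanism used in the proof of \thref{badmodelspace}, or alternatively to connect it directly to the theorem of Khrushchev mentioned in the introduction. Let me think about both routes.

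First I would recall the relevant structure of $\hb$ for an outer symbol $b$ with $\|b\|_\infty \leq 1$. Unlike the inner case, where $\hb = K_\theta$ is a closed subspace of $H^2$ with a clean orthogonality description, the de Branges–Rovnyak space for an outer $b$ has a more delicate norm, and crucially its relationship to the boundary behavior is governed by the function $\log(1 - |b|^2)$ on $\T$. The natural object to attach to such a space is the measure whose density involves $1 - |b|^2$. My instinct is that the condition $\Lambda^\omega_a \cap \hb = \{0\}$ should translate, after unwinding the definition of the $\hb$ norm and membership criteria, into a statement that a certain function cannot simultaneously lie in a smoothness class and satisfy an integrability/orthogonality condition forced by $b$. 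This is exactly the shape of Khrushchev's theorem, which concerns the interplay between boundary smoothness of functions and the vanishing of certain Cauchy-type integrals against singular or highly concentrated measures.

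The concrete steps I would carry out are: (1) choose the outer function $b$ so that $1 - |b|^2$ is supported near a boundary point, or more precisely so that $1 - |b(\zeta)|^2$ decays at a prescribed rate dictated by the modulus of continuity $\omega$ — the defining data here should be tuned analogously to how the weight $w$ was tuned in the proof of \thref{badmodelspace} via \thref{multiplier} and \thref{momentlemma}; (2) show that membership of $f$ in $\hb$ forces an estimate of the form $\int_\T |f|^2 \, d\mu < \infty$ for a measure $\mu$ with density $\sim (1-|b|^2)^{-1}$, which blows up rapidly where $|b| \to 1$; (3) show that a nonzero $f \in \Lambda^\omega_a$ with, say, $f(0) \neq 0$ (using backward-shift invariance to reduce to this case, as in the previous proof) cannot satisfy that integrability constraint because its modulus of continuity controls how fast $f$ can vanish, and hence $|f|^2$ cannot be integrable against so singular a weight. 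The role of $\omega$ is to give a lower bound on $|f|$ near the relevant boundary set that is incompatible with the weight's growth.

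The main obstacle I anticipate is step (2): correctly identifying the membership/integrability criterion for $\hb$ when $b$ is outer, and making it quantitative enough to pit against the $\Lambda^\omega_a$ smoothness. In the inner case the orthogonality $f \perp \theta H^2$ gave us the clean identity $f(0) = \int_\T f \, \overline{\theta p_n}\, d\m$, which drove everything; for outer $b$ there is no inner function and no such exact reproducing trick, so the argument must instead route through the known characterization (e.g. via Clark-type or Sarason's representation of the $\hb$ norm) and ultimately invoke Khrushchev's result to supply the sharp incompatibility. For this reason I expect the cleanest path is not to redo the weak-cyclicity construction from scratch but to \emph{translate} the desired conclusion into the precise hypotheses of Khrushchev's theorem from \cite{khrushchev1978problem} and cite it, after verifying that the smoothness class $\Lambda^\omega_a$ and the chosen outer $b$ satisfy exactly the configuration Khrushchev analyzes. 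The bulk of the work is thus dictionary-building between the $\hb$ formalism and Khrushchev's boundary-approximation framework, rather than a new hard estimate.
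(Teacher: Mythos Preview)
Your final instinct, to route everything through Khrushchev's theorem rather than rebuild a weak-cyclicity argument, is correct and is exactly what the paper does. But the concrete steps (1)--(3) you outline before that would not work, and they reflect a misreading of both the $\hb$ structure and the role of the modulus of continuity.

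In step (2) you assert that $f\in\hb$ forces $\int_\T |f|^2 (1-|b|^2)^{-1}\,d\m<\infty$. This is not the membership criterion for a de Branges--Rovnyak space with outer symbol. The relevant representation here is quite different: when $b$ is outer and invertible in $H^\infty$ (so $|b|$ is bounded below on $\T$), every element of $\hb$ is a Cauchy transform $C_h(z)=\int_\T h(\zeta)(1-\conj{\zeta}z)^{-1}\,d\m(\zeta)$ of some $h\in L^2(\T)$ supported on the set $\{\zeta:|b(\zeta)|<1\}$. This is the structural fact the paper invokes (via \cite[Theorems 20.1 and 28.1]{hbspaces2fricainmashreghi}), and it is what makes the link to Khrushchev possible. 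Your weighted-$L^2$ description goes in the wrong direction and does not recover this Cauchy-transform picture.

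Step (3) is also flawed: a modulus-of-continuity bound is an \emph{upper} bound on oscillation, not a lower bound on $|f|$, so it gives no obstruction to $|f|^2$ being integrable against a singular weight. Khrushchev's theorem is not of this ``integrability vs.\ lower bound'' type at all; it says that for a suitable closed $E\subset\T$, the Cauchy transform of any nonzero $h\in L^1(E)$ fails to lie in $\Lambda^\omega_a$. The paper then simply takes $b$ to be the outer function with $|b|=1$ on $\T\setminus E$ and $|b|=1/2$ on $E$, so that $b$ is invertible in $H^\infty$ and $\hb$ sits inside the class of Cauchy transforms supported on $E$, whence $\Lambda^\omega_a\cap\hb=\{0\}$ immediately. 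There is no backward-shift reduction and no weight-tuning analogous to \thref{multiplier} or \thref{momentlemma}; the argument is a two-line dictionary once the right representation of $\hb$ is in hand.
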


\begin{proof}
By a result of Khrushchev noted in \cite[Theorem 2.4]{khrushchev1978problem}, there exists a closed subset $E$ of the circle $\T$ with the property that for no non-zero integrable function $h$ supported on $E$ is the Cauchy integral $$C_h(z) = \int_{\T} \frac{h(\zeta)}{1-z\conj{\zeta}} d\m(\zeta)$$ a member of the space $\Lambda^\omega_a$. It suffices thus to construct an $\hb$ space for which every function can be expressed as such a Cauchy integral. The simplest choice for the space symbol $b$ is the outer function with modulus $1$ on $\T \setminus E$ and $1/2$ on $E$. Then $b$ is invertible in the algebra $H^\infty$, and a consequence of the general theory (see \cite[Theorem 20.1 and Theorem 28.1] {hbspaces2fricainmashreghi}) is that every function in the space $\hb$ is a Cauchy integral of a function $h$ which is square-integrable on $\T$ and supported only on $E$. Thus $\Lambda^\omega_a \cap \hb = \{ 0 \}$. 
\end{proof}

\section{Some ending questions and remarks}

\label{questionsec}

Since Ransford's theorem seems to be such a powerful tool in establishing results of the kind mentioned here, we are wondering whether it can be further applied. In particular, the following questions come to mind.

\begin{enumerate}
    \item Are our methods strong enough to prove that there exists model spaces $K_\theta$ which admit no non-zero functions in the Wiener algebra of absolutely convergent Fourier series? The result is known, and has been noted in \cite{limani2021abstract}. However, it was proved as a consequence of a complicated construction of a cyclic singular inner function in the Bloch space. Is it so that Ransford's construction is sufficient to prove the non-density result for the Wiener algebra in the fashion presented here?
    
    \item For $p > 2$, the Banach spaces $\ell^p_a$ consisting of functions $f\in \hd$ with Taylor series $\{f_n\}_{n=0}^\infty$ satisfying $$\|f\|^p_{\ell^p_a} := \sum_{n=0}^\infty |f_n|^p < \infty$$ are of course larger than the space $H^2 = \ell^2_a$. Do there exist cyclic singular inner functions in these spaces?
\end{enumerate}

\subsubsection*{Acknowledgements} The author would like to thank Christopher Felder for his reading of the manuscript and for his helpful suggestions for improvements. He would also like to thank Adem Limani for very useful discussions and important comments.

\bibliographystyle{siam}
\bibliography{mybib}

\Addresses

\end{document}